\let\wtilde\widetilde
\def\O{\mathcal O}
\let\b\beta
\def\F{\mathcal F}
\numberwithin{equation}{section}
\let\d\partial
\theoremstyle{theorem}
\numberwithin{thm}{section}
\theoremstyle{definition}
\def\L{\mathcal L}
\DeclareMathOperator{\SEnd}{\mathcal{E}nd}
\DeclareMathOperator{\SExt}{\mathcal{E}xt}
\title{The tilting property for $F_*^e\O_X$ on Fano surfaces and threefolds}
\author{Devlin Mallory\thanks{This material is based upon work supported by the National Science Foundation under Grant No.~\#1840190.}}
\begin{document}
\maketitle
\abstract{
Let $X$ be a smooth variety over a field of characteristic~$p$.
It is a natural question whether the Frobenius pushforwards $F_*^e\O_X$ of the structure sheaf are tilting bundles. 
We show if $X$ is a  smooth del Pezzo surface of degree $\leq 4$ or a Fano threefold with $\vol(K_X)<24$ over a field of characteristic~$p$, then $\Ext^i(F_*^e\O_X,F^e_*\O_X)\neq 0$ and thus $F_*^e\O_X$ is not  tilting. In the case of a del Pezzo of degree $\leq 4$, we show $F_*^e L$ is never tilting for \emph{any} line bundle $L$.}

\section{Introduction}

Let $X$ be a projective variety. In order to understand the derived category of coherent sheaves $D^b(\Coh(X))$, it is useful to have a \emph{tilting object} $E\in D^b(\Coh(X))$; such an object generates the derived category in an appropriate sense, and moreover has $\Ext^i(E,E)=0$ for $i>0$.
If $E$ is tilting, there is an equivalence
$D^b(\Coh (X))\cong D^b(\Mod (A^\op))$, where $A=\End E$. 
This provides a way to reduce questions about coherent sheaves on $X$ to (hopefully simpler) questions about modules over $A$. Ideally, one could take $E$ to be a vector bundle; such an object is called a \emph{tilting bundle}.
It is thus natural to ask for a natural source of tilting bundles. If $X$ is a regular variety in characteristic $p$, one natural family of bundles is provided by $F_*^e\O_X$, the Frobenius pushforwards of the structure sheaf.
It is thus natural to ask:

\begin{quest}
Let $X$ be a smooth projective variety. When is $F_*^e \O_X$ tilting? 
\end{quest}

This question has been considered largely in representation-theoretic contexts (i.e., for homogeneous varieties); for example, the following statements are known:
\begin{itemize}
\item \cite{quadrictilt} If $X$ is the smooth quadric hypersurface of dimension $n$, then $F_*^e\O_X$ is tilting exactly when (a) $e=1$ and $p>n$; (b) $e=2$, $n=4$ and $p=3$, or (c) $e\geq 2$, $n$ odd, and $p\geq n$.
\item \cite{flagtilt} If $X$ is a partial flag variety of type $(1,n,n+1)$  for $\SL_n$, and $p$ is sufficiently large, then $F_*^e\O_X$ is tilting.
\item \cite{grasstilt} If $X$ is the Grassmannian $\Gr(2,n)$, then $F_*^e \O_X$ is not tilting unless $n=4$ and $p>3$.
\end{itemize}

In this note, we will consider non-homogeneous Fano varieties, and show the following:

\begin{thm}[Theorems~\ref{main1} and \ref{main2}]
Let $X$ be a smooth del Pezzo surface of degree $\leq 4$ or a smooth Fano threefold with $\vol(K_X)< 24$. Then $H^i(X,\SEnd F_*^e\O_X)\neq 0$ for $e\gg 0$ and some $i>0$, and thus $F_*^e \O_X$ is not tilting for $e\gg0$.
\end{thm}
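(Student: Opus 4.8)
The plan is to reduce the non-vanishing of higher $\Ext$ to a single numerical inequality and then verify it by Riemann--Roch. Since $F_*^e\O_X$ is a vector bundle, the local sheaf-$\SExt$'s vanish and the local-to-global spectral sequence degenerates, so $\Ext^i(F_*^e\O_X,F_*^e\O_X)=H^i(X,\SEnd F_*^e\O_X)$; thus it suffices to produce some $i>0$ with $H^i(X,\SEnd F_*^e\O_X)\neq 0$. The key observation is that one need not locate this cohomology precisely: the bundle $\SEnd F_*^e\O_X$ has $h^0\geq 1$ (the identity), and in any case $h^0\geq 0$, so if $\chi(X,\SEnd F_*^e\O_X)=\sum_i(-1)^ih^i$ is negative then not all higher cohomology can vanish and some $H^{i>0}\neq 0$. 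Hence the whole statement reduces to showing
\[
\chi\!\left(X,\SEnd F_*^e\O_X\right)<0\qquad\text{for }e\gg 0.
\]

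To compute this Euler characteristic I would use Hirzebruch--Riemann--Roch together with Grothendieck--Riemann--Roch for the Frobenius. Writing $E=F_*^e\O_X$ and $n=\dim X$, we have $\chi(\SEnd E)=\int_X \mathrm{ch}(E^{\vee})\,\mathrm{ch}(E)\,\td(X)$, so everything is controlled by $\mathrm{ch}(F_*^e\O_X)$. The latter is given by GRR applied to the (relative) Frobenius: $\mathrm{ch}(F_*^e\O_X)\,\td(X)=F_*^e\td(X)$, and since $F^e$ is finite flat of degree $p^{en}$ with $F^{e*}$ multiplying classes in $H^{2k}$ by $p^{ek}$, its Gysin map $F_*^e$ acts as multiplication by $p^{e(n-k)}$ on $H^{2k}(X)$. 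This expresses each $\mathrm{ch}_k(F_*^e\O_X)$ as an explicit polynomial in $q:=p^e$ with coefficients built from the Todd and Chern classes of $X$; for instance $\mathrm{ch}_0=q^n$ and $\mathrm{ch}_1=(q^{n-1}-q^n)\td_1$ with $\td_1=-\tfrac12K_X$. Substituting into the formula for $\chi(\SEnd E)$, and using that the odd-degree components of $\mathrm{ch}(\SEnd E)$ vanish by self-duality of $\SEnd E$, one obtains $\chi(\SEnd F_*^e\O_X)$ as a polynomial of degree $2n$ in $q$.

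It then remains to read off the leading coefficient and check that the hypotheses make it negative; this is the computational heart of the argument. For surfaces one finds leading term $\bigl(\tfrac14K_X^2-\chi(\O_X)\bigr)q^4$, negative exactly when $K_X^2<4\chi(\O_X)=4$, i.e.\ when the degree is $\leq 3$. For threefolds the analogous computation should give leading term $\tfrac{1}{24}\bigl(\vol(K_X)-24\bigr)q^6$, where the threshold $24=(-K_X)\cdot c_2(X)$ is forced by the relation $\chi(\O_X)=\tfrac{1}{24}(-K_X)\cdot c_2(X)=1$; this is negative precisely when $\vol(K_X)<24$. In either case $\chi(\SEnd F_*^e\O_X)\to-\infty$, so it is negative for $e\gg 0$ and we conclude. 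The main obstacle is the threefold bookkeeping: one must carry $\mathrm{ch}_1,\mathrm{ch}_2$ of $F_*^e\O_X$ to the orders contributing to $q^6$ and collapse the resulting intersection numbers into the single expression $\vol(K_X)-24$. A secondary point requiring care is reducing to a perfect (or algebraically closed) base field, so that the Frobenius twist $X^{(e)}\cong X$ and GRR are available without changing the relevant cohomology dimensions or intersection numbers.
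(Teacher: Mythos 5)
Your proposal is correct and follows essentially the same route as the paper: reduce to showing $\chi(\SEnd F_*^e\O_X)<0$, compute $\ch(F_*^e\O_X)$ via Grothendieck--Riemann--Roch for the Frobenius (the paper quotes this as a known Adams-operation formula, which is exactly your Gysin-map computation), and read off the leading coefficients $\tfrac14K_X^2-\chi(\O_X)$ and $\tfrac{1}{24}(\vol(-K_X)-24)$, which match the paper's. The only difference is that for surfaces the paper additionally tracks the subleading term (and one special case at $p^e=2$, $d=3$) to get the conclusion for all $e>0$ rather than just $e\gg0$, but that is not required by the statement as given.
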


In fact, in the del Pezzo case, we prove the result for all $e>0$, rather than $e\gg0$.
Moreover, in this case we show moreover that $F_*^e L$ is never tilting for \emph{any} line bundle $L$.

This suggests that when $X$ is a Fano variety, $F_*^e\O_X$ often fails to be tilting, particularly in the case where $X$ has nontrivial moduli.

The proof of this theorem is by an analysis of the Euler characteristic of $\SEnd F_*^e\O_X$, via an explicit calculation of the Chern classes of $\SEnd F_*^e\O_X$. 
Following some preliminaries in \Cref{prelim}, these calculations are made in \Cref{cccalc}, and the applications to surfaces and threefolds take place in \Cref{2fold} and \Cref{3fold}.

 These methods may be of independent interest in the study of the properties of $F_*^e\O_X$ and $\SEnd F_*^e\O_X$; in particular, it should not be difficult to generalize the methods here to show that $F_*^e\L$ is not tilting for many ample line bundles $L$. It should also be possible to extend the techniques here to the higher-dimensional case. However, the calculations involved become more complicated, and the Euler characteristic becomes a much coarser tool for detecting higher cohomology as dimension increases. We describe a more conceptual (though incomplete!) approach to this question in \Cref{moduli}, as well as a potential connection to $D$-affinity.

\begin{rem}
After our initial proof of the results above, it came to our attention that
\Cref{main1} contradicts the results of 
Section~5 of \cite{samokhin}, which claims that if $X$ is the blowup of $\P^2$ at $k$ points in general position that $\Ext^i(F_*\O_X,F_*^e\O_X)=0$ for $i>0$. In particular, our results show that this claim fails if $k\geq 6$. In \Cref{comparison} we discuss a gap in the arguments of Section~5 of \cite{samokhin}.
\end{rem}

\section{Preliminaries}
\label{prelim}
In this section we briefly review some of the necessary definitions. The reader is encouraged to skip to \Cref{cccalc} and return here as needed.

\subsection{Differential operators and the Frobenius}

\begin{dfn}
Let $k$ be a field and
$R$ be a $k$-algebra. The $k$-linear differential operators of order $m$, denoted $D^m_{R/k}\subset \End_k(R)$, are defined inductively:
\begin{itemize}
\item $D^0_{R/k} =\Hom_R(R,R)\cong R$, viewed as multiplication by $R$. 
\item  $\delta \in \End_k(R)$ is in $D^m_{R/k}$ if $[\delta, r] \in D^{m-1}_{R/k}$ for any $r\in D^0_{R/k}$.
\end{itemize}
We write $D_{R/k}=\bigcup D^m_{R/k}$. 
\end{dfn}

In characteristic $p$, $D_{R/k}$ has a nice description in terms of the Frobenius.

\begin{prop}[{{\cite[1.4.9]{Yekutieli}}}]
Let $k$ be a perfect field of characteristic $p>0$, and 
and  let $R$ be an $F$-finite $k$-algebra. Then 
$$
D_{R/k} = \bigcup_{e} \Hom_{R^{p^e}}(R,R).
$$
\end{prop}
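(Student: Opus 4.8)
The plan is to prove the two inclusions
$$D^{p^e-1}_{R/k}\subseteq \Hom_{R^{p^e}}(R,R)\subseteq D_{R/k}$$
for every $e$, with the $F$-finiteness hypothesis entering only in the second, and then to pass to the union over $e$. This suffices: the right-hand inclusions give $\bigcup_e\Hom_{R^{p^e}}(R,R)\subseteq D_{R/k}$, while the left-hand inclusions give the reverse, since each $D^m_{R/k}$ lies in $D^{p^e-1}_{R/k}\subseteq\Hom_{R^{p^e}}(R,R)$ once $p^e>m$, and $D_{R/k}=\bigcup_m D^m_{R/k}$. (Note that $\Hom_{R^{p^e}}(R,R)$ is increasing in $e$, because $R^{p^{e+1}}\subseteq R^{p^e}$, so the union is sensible.)

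For the first inclusion I would argue with commutators. Writing $\operatorname{ad}_r(\delta)=[\delta,r]=\delta r-r\delta$ for multiplication by $r\in R$, the definition says that $\delta\in D^m_{R/k}$ precisely when every $(m+1)$-fold iterated commutator of $\delta$ against elements of $R$ vanishes. The characteristic-$p$ miracle is that
$$\operatorname{ad}_r^{p^e}(\delta)=\sum_{i=0}^{p^e}(-1)^i\binom{p^e}{i}r^i\,\delta\,r^{p^e-i}=\delta r^{p^e}-r^{p^e}\delta=[\delta,r^{p^e}],$$
since all the intermediate coefficients $\binom{p^e}{i}$, $0<i<p^e$, vanish modulo $p$. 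Hence if $\delta$ has order at most $p^e-1$, then $\operatorname{ad}_r^{p^e}(\delta)\in D^{-1}_{R/k}=0$, so $\delta$ commutes with multiplication by $r^{p^e}$ for every $r$. As $k$ is perfect, Frobenius identifies $R^{p^e}$ with the subring $\{r^{p^e}:r\in R\}$, and therefore $\delta$ is $R^{p^e}$-linear.

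The second inclusion is the substantive one, and I would handle it through modules of principal parts. With $I=\ker(R\otimes_kR\to R)$ the diagonal ideal, one has $D^m_{R/k}=\Hom_R((R\otimes_k R)/I^{m+1},R)$, whereas $\Hom_{R^{p^e}}(R,R)=\Hom_R(R\otimes_{R^{p^e}}R,R)$. Now $R\otimes_{R^{p^e}}R=(R\otimes_kR)/J$, where $J$ is generated by the elements $a^{p^e}\otimes1-1\otimes a^{p^e}=(a\otimes1-1\otimes a)^{p^e}$ for $a\in R$; in particular $J\subseteq I^{p^e}$, which already re-proves the first inclusion. The point is to bound $I$ from the other side: I claim the image $\bar I$ of $I$ in $B:=R\otimes_{R^{p^e}}R$ is nilpotent, so that $I^N\subseteq J$ for some $N$. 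Indeed $\bar I$ is the augmentation ideal of $B\to R$, and each generator $\overline{a\otimes1-1\otimes a}$ satisfies $(\overline{a\otimes1-1\otimes a})^{p^e}=0$ by the displayed relation. Dualizing the surjections $(R\otimes_kR)/I^N\to B\to R$ then gives $\Hom_{R^{p^e}}(R,R)\subseteq D^{N-1}_{R/k}\subseteq D_{R/k}$.

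The only real obstacle is producing a \emph{finite} such $N$, and this is exactly where $F$-finiteness is indispensable: since $R$ is a finite $R^{p^e}$-module, a finite set of module generators $a_1,\dots,a_n$ yields a finite generating set $\{\overline{a_i\otimes1-1\otimes a_i}\}$ of $\bar I$ (because any $b=\sum s_ia_i$ with $s_i\in R^{p^e}$ gives $b\otimes1-1\otimes b=\sum s_i(a_i\otimes1-1\otimes a_i)$). An ideal generated by finitely many elements, each of nilpotency index at most $p^e$, is nilpotent, with $\bar I^{n(p^e-1)+1}=0$; thus one may take $N=n(p^e-1)+1$, yielding even the explicit bound $\Hom_{R^{p^e}}(R,R)\subseteq D^{n(p^e-1)}_{R/k}$. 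Without finiteness $\bar I$ need not be nilpotent and an $R^{p^e}$-linear endomorphism can have infinite order, so this hypothesis cannot be dropped.
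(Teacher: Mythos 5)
Your argument is correct and complete; the paper itself gives no proof of this proposition, citing it to Yekutieli, and your two-inclusion argument (the $\operatorname{ad}_r^{p^e}=\operatorname{ad}_{r^{p^e}}$ commutator identity for one direction, and the nilpotence of the augmentation ideal of $R\otimes_{R^{p^e}}R$ --- via $F$-finiteness and $(a\otimes 1-1\otimes a)^{p^e}=a^{p^e}\otimes 1-1\otimes a^{p^e}$ --- for the other) is essentially the standard proof found in the cited source. The only ingredient you quote without proof, namely the identification $D^m_{R/k}=\Hom_R\bigl((R\otimes_k R)/I^{m+1},R\bigr)$, is asserted in the paper's own preliminaries, so this is a legitimate input.
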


\begin{rem}
This globalizes to an equality of sheaves
$$
D_{X} = \bigcup_{e} \SHom_{\O_X^{p^e}}(\O_X,\O_X),
$$
where $D_X$ is the sheaf of differential operators on $X$.
We write $D^{(e)}_X$ for $\SHom_{\O_X^{p^e}}(\O_X,\O_X)$, viewed as a left $\O_X$-module by postcomposition.

Moreover, as abelian groups,
$ \SHom_{\O_X^{p^e}}(\O_X,\O_X)$ and $ \SHom_{\O_X}(F_*^e\O_X,F_*^e\O_X) $ are equal; one can check that 
this identification leads to an $\O_X$-module isomorphism
$$
 F_*^eD^{(e)}_X= F_*^e\SHom_{\O_X^{p^e}}(\O_X,\O_X) \cong 
\SHom_{\O_X}(F_*^e\O_X,F_*^e\O_X) .
$$

In particular, $H^i(X,\SEnd_{\O_X}(F_*^e\O_X))=H^i(X,D^{(e)}_X)$ for all $i$ and $e$.
\end{rem}

\begin{rem}
\label{pparts}
We note also that $D_X^{( e )}$ is $\SHom_{\O_X}(\O_{X\times X}/\Delta^{p^{[e]}},\O_X)$, where $\Delta$ is the ideal of the diagonal in $X\times X$ and $\O_{X\times X}/\Delta^{p^{[e]}}$ is viewed as a \emph{left} $\O_X$-module; similarly, 
$D_X^m $ is $\SHom_{\O_X}(\O_{X\times X}/\Delta^{m+1},\O_X)$.
\end{rem}

\subsection{Tilting bundles}

\begin{dfn}
A coherent sheaf $E$ on a variety $X$ is a tilting sheaf if:
\begin{itemize}
\item 
$\Ext^i(E,E)=0$ for all $i>0$,
\item
$\End(E)$ has finite global dimension, and
\item $E$ generates $D^b(\Coh(X))$.
\end{itemize}
$E$ is a tilting bundle if it is tilting and locally free.
\end{dfn}

\begin{rem}
The only condition above that we will use in this note is the first: we will show that $\Ext^i(F_*^e\O_X,F_*^e\O_X)\neq 0$ for many varieties $X$, and thus that $F_*^e\O_X$ is not tilting.
\end{rem}

\begin{rem}
The interest in tilting sheaves (or more generally tilting objects of the derived category) is that if $E$ satisfies the stronger definition mentioned in the preceding remark, and if $A=\End(E)$, then there is an equivalence of derived categories $D^b(\Coh (X))\cong D^b(\Mod (A^\op))$. Thus, tilting sheaves, when they exist, provide a powerful tool for analyzing $D^b(\Coh(X))$.
\end{rem}

The following elementary observation will be useful in computing $\Ext^i(E,E)$:

\begin{lem}
Let $E$ be a locally free sheaf. Then $\Ext^i(E,E)=H^i(\SHom(E,E))$.
\end{lem}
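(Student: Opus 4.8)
The plan is to invoke the local-to-global spectral sequence relating global and local $\Ext$. For coherent sheaves on $X$ there is a first-quadrant spectral sequence
$$
H^p(X, \SExt^q(E,E)) \Rightarrow \Ext^{p+q}(E,E),
$$
arising from the composite of functors $\Gamma(X,-) \circ \SHom(E,-) = \Hom(E,-)$ together with the fact that $\SHom(E,-)$ carries injective $\O_X$-modules to $\Gamma$-acyclic sheaves. The lemma will follow once I show that the local $\Ext$ sheaves $\SExt^q(E,E)$ vanish for $q>0$: in that case the spectral sequence degenerates onto the row $q=0$ and yields $\Ext^i(E,E) \cong H^i(X, \SHom(E,E))$, as desired.

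The one substantive step is thus the vanishing of the higher local $\Ext$ sheaves, and this is exactly where local freeness enters. Since $\SExt^q(E,E)$ may be computed locally, I would work on a small enough open set $U$ on which $E|_U \cong \O_U^{\oplus n}$; then $E|_U$ in degree zero is already a free resolution of itself, and applying $\SHom(-,E|_U)$ to this trivial resolution shows $\SExt^q(E,E)|_U = 0$ for all $q>0$. As $U$ was arbitrary, we conclude $\SExt^q(E,E)=0$ for $q>0$ globally.

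As this is a standard fact of homological algebra, there is no genuine obstacle; the only points requiring attention are the construction of the spectral sequence (the Grothendieck spectral sequence for the composite functor above) and the verification that $\SHom(E,-)$ sends injectives to $\Gamma$-acyclic sheaves. One could instead avoid the spectral sequence altogether: local freeness gives a natural isomorphism $\SHom(E,E) \cong E^\vee \otimes E$, and, because tensoring by the locally free sheaf $E^\vee$ is exact, an isomorphism $\Ext^i(E,E) \cong \Ext^i(\O_X, E^\vee\otimes E)$; the claim then reduces to the identity $\Ext^i(\O_X,\mathcal G) = H^i(X,\mathcal G)$, which holds for every $\mathcal G$ since $\Hom(\O_X,-)=\Gamma(X,-)$ has sheaf cohomology as its right derived functor.
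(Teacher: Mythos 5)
Your proof is correct and follows the same route as the paper: both invoke the local-to-global $\Ext$ spectral sequence and observe that local freeness of $E$ kills $\SExt^q(E,E)$ for $q>0$, so the sequence collapses. Your extra details (checking the vanishing on trivializing opens, and the alternative via $\SHom(E,E)\cong E^\vee\otimes E$) are fine but not needed beyond what the paper records.
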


\begin{proof}
Since $E$ is locally free, $\SExt^i(E,E)=0$ for $i>0$. Thus the spectral sequence
$$
H^i(\SExt^j(E,E))\implies \Ext^{i+j}(E,E)
$$
collapses to yield $H^i(\SHom(E,E))=\Ext^i(E,E)$.
\end{proof}

\begin{rem}
Let $X$ be a regular variety over an $F$-finite field of characteristic $p$. Then $F_*^\e\O_X$ is locally free for any $e$ by Kunz's theorem, and thus in particular
$\Ext^i(F_*^e\O_X,F_*^e\O_X)=H^i(\SHom(F_*^e\O_X,F_*^e\O_X))$.
\end{rem}

\subsection{Chern classes and Grothendieck--Riemann--Roch}

The following material is a basic overview of the required tools from the theory of Chern classes and Grothendieck--Riemann--Roch; we include it here because it may be unfamiliar to those primarily concerned with differential operators in commutative algebra.

\begin{dfn}
Let $E$ be a vector bundle on a smooth variety $X$. We write $A(X)$ for the Chow ring of $X$.
The Chern classes of $X$ are elements $c_i(E)\in A^i(X)$.
We write $c_i(X):=c_i(T_X)$.
\end{dfn}

(For a definition and basic properties of the Chow ring, see \cite[Appendix~A]{Hartshorne}.)

\begin{dfn}
The Chern character $\ch(E)$ of a vector bundle $E$ is defined as follows:
For a line bundle $L$, we set $\ch(L)=e^{c_1(L)}=1+c_1(L)+c_1(L)^2/2!+\dots$. It is then extended multiplicatively to arbitrary vector bundles via the splitting principle.
Explicitly,
$$\ch(E)=\rank(E)+c_1(E)+(c_1(E)^2-2c_2(E))/2!+\cdots.$$
\end{dfn}

The key properties of the Chern character we will need are the following:
\begin{itemize}
\item $\ch(E\otimes F)=\ch(E)\cdot \ch(F)$ and $\ch(E\oplus F)=\ch(E)+\ch(F)$.
\item $\ch_i(E^*)=(-1)^i\cdot \ch_i(E)$.
\end{itemize}

\begin{dfn}
The Todd class of a vector bundle $E$, denoted $\td(E)$, is defined as follows: if $L$ is a line bundle, then $\td(L)=\frac{c_1(L)}{1-e^{-c_1(L)}}=1+c_1(L)/2+c_1(L)^2/12+\dots$. If $E=\bigoplus L_i$, then $\td(E)=\prod \td(L_i)$; this suffices to determine the Todd class of any vector bundle via the splitting principle.
Explicitly, 
$$
\td(E)=1+c_1(E)/2
+(c_1(E)^2+c_2(E))/12
+c_1(E)c_2(E)/24+\dots.
$$
Again, we write $\td(X):=\td(T_X)$.
\end{dfn}

\begin{thm}[Grothendieck--Riemann--Roch]
Let $E$ be a coherent sheaf on a smooth projective variety $X$ of dimension $n$. Then
$$
\chi(E):=\sum (-1)^i \dim H^i(E) = \int_X \ch(E)\cdot \td(X) :=\sum_{j} \ch_{n-j}(E)\td_j(X).
$$
\end{thm}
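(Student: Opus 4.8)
The statement is the Hirzebruch--Riemann--Roch formula, i.e.\ Grothendieck--Riemann--Roch applied to the structure morphism $\pi\colon X\to\mathrm{pt}$, and the plan is to deduce it from the general GRR machinery by factoring $\pi$ through a projective space. The first observation is that both sides of the asserted identity are additive on short exact sequences and hence factor through the Grothendieck group $K_0(X)$: the left-hand side because $\chi$ is additive, and the right-hand side because $\ch$ is additive (one of the properties recorded above) while $\td(X)$ and $\int_X$ are fixed. Thus it suffices to verify the formula on a set of generators of $K_0(X)$, and more usefully the whole statement becomes a question about the map $[E]\mapsto\ch(E)$ and its compatibility with proper pushforward. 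Concretely, I would choose a closed embedding $i\colon X\hookrightarrow\P^N$ and factor $\pi$ as $X\xrightarrow{i}\P^N\xrightarrow{p}\mathrm{pt}$; since $i$ is a closed immersion, $i_*$ is exact and $H^j(X,E)=H^j(\P^N,i_*E)$, so $\chi_X(E)=\chi_{\P^N}(i_*E)$.

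The next step is GRR for $p\colon\P^N\to\mathrm{pt}$, i.e.\ HRR for projective space. Because $K_0(\P^N)$ is generated by the classes $[\O(d)]$, additivity reduces this to the single computation $\chi(\O(d))=\int_{\P^N}e^{dH}\,\td(\P^N)$, where $H=c_1(\O(1))$ and $\td(\P^N)=\bigl(H/(1-e^{-H})\bigr)^{N+1}$ by the Euler sequence and multiplicativity of $\td$. Both sides equal $\binom{d+N}{N}$: the left by the explicit cohomology of line bundles on $\P^N$, the right by extracting the coefficient of $H^N$ via a residue/generating-function identity.

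The \emph{crux} is GRR for the closed immersion $i$, namely the identity
$$\ch(i_*\F)\cdot\td(\P^N)=i_*\bigl(\ch(\F)\cdot\td(X)\bigr)$$
for every coherent sheaf $\F$ on $X$. The content is the computation of $\ch(i_*\F)$ in terms of data on $X$ and the normal bundle $N=N_{X/\P^N}$. The cleanest route is deformation to the normal cone, which reduces the computation to the model case of the zero section of the total space of $N$; there $i_*\F$ admits an explicit Koszul resolution built from $\Lambda^\bullet N^\vee$, so the Whitney formula expresses $\ch(i_*\F)$, and the normal-bundle sequence $0\to T_X\to i^*T_{\P^N}\to N\to 0$, which yields $i^*\td(\P^N)=\td(X)\cdot\td(N)$, combined with the self-intersection formula $i^*i_*(-)=(-)\cup c_{\mathrm{top}}(N)$, reconciles the Todd twists on the two sides.

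Assembling the pieces and using the projection formula $\int_{\P^N}i_*(\alpha)=\int_X\alpha$ gives
$$\chi_X(E)=\chi_{\P^N}(i_*E)=\int_{\P^N}\ch(i_*E)\,\td(\P^N)=\int_{\P^N}i_*\bigl(\ch(E)\,\td(X)\bigr)=\int_X\ch(E)\,\td(X),$$
as desired. The main obstacle is unquestionably the third step: controlling how the Chern character transforms under a closed immersion (the ``Riemann--Roch without denominators'' phenomenon), for which the deformation-to-the-normal-cone argument is the most robust tool, though it requires the excess-intersection formalism and careful bookkeeping of the twist by $\td(N_{X/\P^N})$. The projective-space base case and the final $K_0$-theoretic assembly are, by comparison, routine.
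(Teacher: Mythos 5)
The paper states this result as classical background (it is Hirzebruch--Riemann--Roch, i.e.\ GRR for the structure map) and supplies no proof of its own, so there is nothing internal to compare against; your outline is the standard Borel--Serre/Fulton argument --- reduce to $K_0(X)$ by additivity, factor $X\to\mathrm{pt}$ through a closed embedding into $\P^N$, check the formula on the generators $\O(d)$ of $K_0(\P^N)$, and establish GRR for the closed immersion via deformation to the normal cone and the Koszul resolution --- and it is correct as a sketch. The one implicit point worth flagging is that $\ch$ of an arbitrary coherent sheaf (e.g.\ $i_*E$ on $\P^N$) must be defined via a finite locally free resolution, which exists precisely because the varieties involved are smooth; with that understood, the assembly you describe is the standard complete proof.
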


We will make crucial use of the following theorem in calculating Chern classes of Frobenius pushforwards:

\begin{thm}{{\cite[Theorem~6.7]{wonderful}}}
\label{wonderfulthm}
Let $L$ be a line bundle on a smooth projective variety $X$ of characteristic $p$, let $e>0$, and write $\psi_{p^e}$ for the $p^e$-th Adams operation, which acts as multiplication by $p^i$ on an element of $A^i(X)$. Then
$$
\frac{\ch(F_*^e L)}{p^{e n}} 
= \frac{\psi_{p^e}\inv\bigl(\td(X) \cdot \ch(L)\bigr)}{\td(X)},
$$
\end{thm}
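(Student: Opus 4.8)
The plan is to deduce the formula from the Grothendieck--Riemann--Roch theorem applied to the iterated Frobenius morphism $F^e\colon X\to X$, where the only nontrivial input is a description of the pushforward $F_*^e$ on the rational Chow ring $A(X)_{\mathbb Q}$. Since $X$ is smooth and $F$-finite, $F^e$ is a finite morphism of smooth projective varieties; in particular $R^jF_*^eL=0$ for $j>0$, so the higher direct images do not intervene, and $F_*^e\O_X$ is locally free of rank $p^{en}$ with $n=\dim X$. Applying Grothendieck--Riemann--Roch to $F^e$ and the line bundle $L$, and using that source and target are the same scheme $X$ (so both Todd factors are $\td(X)$), I obtain
$$
\ch(F_*^eL)\cdot\td(X)=F_*^e\bigl(\ch(L)\cdot\td(X)\bigr).
$$
Once the operator $F_*^e$ on $A(X)_{\mathbb Q}$ is identified, dividing by $\td(X)$ (a unit, since its degree-$0$ part is $1$) and by $p^{en}$ will produce the claimed identity.

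The heart of the argument is therefore the computation that $F_*^e$ acts on $A^i(X)$ as multiplication by $p^{e(n-i)}$, equivalently $F_*^e=p^{en}\,\psi_{p^e}\inv$ on $A(X)_{\mathbb Q}$. I would obtain this from two standard facts. First, the pullback $(F^e)^*$ multiplies $A^i(X)$ by $(p^e)^i=p^{ei}$: for a line bundle one has $(F^e)^*M=M^{\otimes p^e}$, so $(F^e)^*$ scales $c_1$ by $p^e$, and the splitting principle propagates this to all Chern classes; this is exactly the Adams operation $\psi_{p^e}$. Second, by the projection formula the composite $F_*^e\circ(F^e)^*$ is multiplication by the degree $\deg F^e=p^{en}$, since $F_*^e(F^e)^*\alpha=\alpha\cdot F_*^e[X]=(\deg F^e)\,\alpha$. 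As $(F^e)^*$ is invertible over $\mathbb Q$, every $\gamma\in A^i(X)_{\mathbb Q}$ equals $(F^e)^*(p^{-ei}\gamma)$, whence $F_*^e\gamma=p^{en}p^{-ei}\gamma=p^{e(n-i)}\gamma$; since $\psi_{p^e}\inv$ scales $A^i$ by $p^{-ei}$, this is precisely $p^{en}\psi_{p^e}\inv$. Substituting into the displayed identity and rearranging yields the theorem.

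The step requiring the most care is the passage through Grothendieck--Riemann--Roch for the \emph{absolute} Frobenius, since over a field $k\neq\mathbb F_p$ this map is not $k$-linear and the two copies of $X$ must be distinguished. I would make this rigorous by running the argument instead for the \emph{relative} Frobenius $F^e_{X/k}\colon X\to X^{(p^e)}$, a finite $k$-morphism to the smooth projective Frobenius twist; for $k$ perfect the base-change isomorphism $X^{(p^e)}\cong X$ identifies $\td(X^{(p^e)})$ with $\td(X)$ and the Chow rings compatibly, so the formula is recovered verbatim. The remaining verifications---that $\td(X)$ is invertible over $\mathbb Q$, that $\psi_{p^e}$ is invertible there, and that $\deg F^e=p^{en}$---are routine. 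As a sanity check one can confirm the formula directly on $X=\mathbb P^1$ with $L=\O$, where $F_*^e\O$ decomposes explicitly and one computes $\ch(F_*^e\O)=p^{en}+(1-p^{en})[\mathrm{pt}]$, matching $p^{en}\cdot\psi_{p^e}\inv(\td(X))/\td(X)$.
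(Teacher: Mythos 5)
The paper offers no proof of this statement: it is imported verbatim from \cite[Theorem~6.7]{wonderful}, so there is no internal argument to compare yours against. That said, your proposal is the standard derivation of this formula and is essentially correct. Grothendieck--Riemann--Roch applied to the $e$-th Frobenius gives $\ch(F_*^eL)\cdot\td(X)=F_*^e\bigl(\ch(L)\cdot\td(X)\bigr)$ (the morphism is finite, so $f_!=f_*$ as you note), and the identification $F_*^e=p^{en}\,\psi_{p^e}^{-1}$ on $A(X)_{\mathbb{Q}}$ follows from the projection formula $F_*^e(F^e)^*\alpha=p^{en}\alpha$ together with $(F^e)^*=\psi_{p^e}$. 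Your insistence on running the argument for the \emph{relative} Frobenius $F^e_{X/k}\colon X\to X^{(p^e)}$, with $X^{(p^e)}$ identified with $X$ over the perfect ground field, is exactly the right fix, since Grothendieck--Riemann--Roch is a statement about morphisms of varieties over the base field and the absolute Frobenius is not $k$-linear. Your $\mathbb{P}^1$ sanity check is also correct.

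The one step you state too quickly is the claim that the splitting principle shows $(F^e)^*$ acts as multiplication by $p^{ei}$ on all of $A^i(X)_{\mathbb{Q}}$: the splitting principle only controls classes in the subring generated by Chern classes of vector bundles, which a priori need not exhaust the Chow ring. There are two standard repairs. Rationally the splitting-principle computation does suffice, but only after invoking Grothendieck's theorem that $\ch\colon K_0(X)_{\mathbb{Q}}\to A(X)_{\mathbb{Q}}$ is an isomorphism for smooth quasi-projective $X$, so that Chern characters span $A^i(X)_{\mathbb{Q}}$. Alternatively, one argues directly on cycles: a codimension-$i$ subvariety $V$ is generically smooth (the ground field being perfect), and at its generic point local equations $x_1,\dots,x_i$ pull back under $F^e$ to $x_1^{p^e},\dots,x_i^{p^e}$, whose quotient has length $p^{ei}$, so $(F^e)^*[V]=p^{ei}[V]$ already at the level of cycles. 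With either patch inserted your proof is complete, and it is in substance the expected proof of the cited result.
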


\section{Chern class calculations}
\label{cccalc}

Let $X$ be a smooth variety of dimension $n$ in characteristic $p$.
We write $d_i$ for the $i$-th Todd class of $X$, so that $\td(X)=1+d_1+d_2+\cdots$; thus, $d_1=c_1(X)/2$, $d_2=(c_1(X)^2+c_2(X))/12$, $d_3=c_1(X)c_2(X)/24$, and so on.

\begin{prop}
Let $X$ be a smooth variety of dimension $n$ in characteristic $p$.
Then 
$$\ch(X,\SEnd F_*^e\O_X) = 
\let\bigg\Big
p^{2en} d_n + 
(p^{2e(n-1)}-p^{2en})(2d_2-d_1^2 )d_{n-2}+\cdots
.$$
\end{prop}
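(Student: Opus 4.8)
The plan is to compute the Chern character of $\SEnd F_*^e\O_X$ and then feed it into Grothendieck--Riemann--Roch; the displayed expression is exactly the degree-$n$ part of $\ch(\SEnd F_*^e\O_X)\cdot\td X$, i.e.\ the integrand computing $\chi(\SEnd F_*^e\O_X)$. Write $E=F_*^e\O_X$, which is locally free of rank $p^{en}$ by Kunz's theorem, so that $\SEnd E\cong E\otimes E^*$ and hence $\ch(\SEnd E)=\ch(E)\cdot\ch(E^*)$. Before computing anything I would record a structural simplification: since dualization acts on $A(X)$ by the ring homomorphism sending $\ch_i\mapsto(-1)^i\ch_i$, applying it to $\ch(\SEnd E)=\ch(E)\ch(E^*)$ merely interchanges the two factors and so fixes the product. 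Thus $\ch(\SEnd E)$ has vanishing odd-degree components; in particular $c_1(\SEnd E)=0$, which is precisely why no $d_{n-1}$ term appears and why only even total degrees survive.

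Next I would compute $\ch(E)$ from \Cref{wonderfulthm} applied to $L=\O_X$, using $\ch(\O_X)=1$ and the fact that $\psi_{p^e}^{-1}$ rescales $A^i(X)$ by $p^{-ei}$:
$$\ch(F_*^e\O_X)=p^{en}\cdot\frac{\psi_{p^e}^{-1}(\td X)}{\td X}=p^{en}\cdot\frac{\sum_i p^{-ei}d_i}{\sum_i d_i}.$$
Expanding $1/\td X=1-d_1+(d_1^2-d_2)+\cdots$ and multiplying out extracts the graded pieces of $\ch(E)$ as explicit polynomials in the $d_i$ whose coefficients are integral combinations of powers of $p^e$; for instance $\ch_1(E)=(p^{e(n-1)}-p^{en})d_1$ and $\ch_2(E)=(p^{e(n-2)}-p^{en})d_2+(p^{en}-p^{e(n-1)})d_1^2$.

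I would then assemble $\ch(\SEnd E)=\ch(E)\ch(E^*)$ degree by degree, using $\ch_i(E^*)=(-1)^i\ch_i(E)$. The degree-$2$ component is $2\,\ch_0(E)\ch_2(E)-\ch_1(E)^2$, and substituting the expressions above makes the stray powers of $p^e$ cancel, leaving $(p^{2e(n-1)}-p^{2en})(2d_2-d_1^2)$; paired with $d_{n-2}$ under Grothendieck--Riemann--Roch this is the displayed correction term. The degree-$0$ component is $\rank(\SEnd E)=p^{2en}$, contributing $p^{2en}d_n$, i.e.\ $p^{2en}\chi(\O_X)$ after integration. The omitted terms $\cdots$ arise in exactly the same way from the higher even graded pieces $\ch_{2k}(\SEnd E)$ paired with $d_{n-2k}$.

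The only real obstacle is organizational rather than conceptual: one must expand the rational expression $\psi_{p^e}^{-1}(\td X)/\td X$ to the required order and verify that, after forming $\ch(E)\ch(E^*)$, the various powers of $p^e$ recombine into the clean coefficients such as $p^{2e(n-1)}-p^{2en}$. The duality symmetry noted above roughly halves this bookkeeping by eliminating all odd-degree contributions, and \Cref{wonderfulthm} supplies the one genuinely nontrivial ingredient.
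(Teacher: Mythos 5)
Your proposal is correct and follows essentially the same route as the paper: apply the cited Chern character formula for $F_*^e\O_X$ with $L=\O_X$, expand $\psi_{p^e}^{-1}(\td X)/\td X$ in the $d_i$, form $\ch(E)\ch(E^*)$ using $\ch_i(E^*)=(-1)^i\ch_i(E)$ so that only even degrees survive, and pair with $\td X$ under Grothendieck--Riemann--Roch. Your observation that dualization fixes $\ch(E)\ch(E^*)$ is a slightly cleaner way to see the vanishing of odd-degree terms than the paper's direct expansion, but the computation is otherwise identical, including the cancellation yielding the coefficient $p^{2e(n-1)}-p^{2en}$.
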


\begin{rem}
Since the calculations below will be applied in the case of surfaces and threefolds, we will expand out all calculations explicitly to the necessary degrees; the proof will make clear how to obtain the next terms in the formula above.

In order to apply this particular formula, we will need to know the various intersection products of the $d_i$, i.e., of the coefficients of the Todd class. We will not make a general analysis, but rather consider on the special case of Fano surfaces and threefolds.
\end{rem}

\begin{proof}
By Grothendieck--Riemann--Roch,
$$
\chi(\SEnd F_*^e\O_X) = \int \ch\SEnd(F_*^e\O_X)\cdot \td(X) ;
$$

To calculate $\ch\SEnd(F_*^e\O_X)$, we first calculate
$\ch(F_*^e\O_X)$ by
\Cref{wonderfulthm}
$$
\frac{\ch(F_*^e \O_X)}{p^{e n}} 
= \frac{\psi_{p^e}\inv\bigl(\td(X) \cdot \ch(\O_X)\bigr)}{\td(X)}
= \frac{\psi_{p^e}\inv\bigl(\td(X) \bigr)}{\td(X)} .
$$ 
Write
$$
\td(X) = 1+d_1+d_2+d_3+\dots
$$
(only later will it be useful to write $\td(X)$ in terms of $c_i(T_X)$).
Then 
$$
\psi\inv_{p^e}(\td(X)) = 1+p^{-e}d_1+p^{-2e}d_2+p^{-3e}d_3+\dots,
$$
and 
$$
\frac{1}{\td X}= 1-d_1+(d_1^2-d_2) +(-d_3+2d_2d_1-d_1^3)+\dots;
$$
thus 
$$
\frac{\ch(F_*^e \O_X)}{p^{e n}}  = 
(1+p^{-e}d_1+p^{-2e}d_2+p^{-3e}d_3+\cdots)
\cdot (1-d_1+(d_1^2-d_2) +(2d_2d_1-d_1^3)+\cdots).
$$
Expanding this out, we have
$$
\eqalign{
\frac{\ch(F_*^e \O_X)}{p^{e n}}  &= 1+(p^{-e}-1)d_1 + (p^{-2e}-1 )d_2-(p^{-e}-1)d_1^2 +\dots
}
$$
and thus
$$
\let\bigg\Big
\eqalign{
\ch(F_*^e \O_X)  &= 
\underbrace{p^{en}}_{\ch_0}+\underbrace{(p^{e(n-1)}-p^{en})d_1}_{\ch_1} + \underbrace{(p^{e(n-2)}-p^{en})d_2-(p^{e(n-1)}-p^{en})d_1^2}_{\ch_2}+\dots
}
$$

If $E$ is any vector bundle, then
$$
\eqalign{
\ch(\SEnd E) &= \ch(E\otimes E^*) \cr&= \ch(E)\cdot \ch(E^*)
\cr&= 
(\ch_0(E)+\ch_1(E)+\ch_2(E)+\cdots)
\cdot 
(\ch_0(E^*)+\ch_1(E^*)+\ch_2(E^*)+\cdots)
\cr&= 
(\ch_0(E)+\ch_1(E)+\ch_2(E)+\dots)
\cdot 
(\ch_0(E)-\ch_1(E)+\ch_2(E)+\dots)
\cr&= 
\ch_0(E)^2 + \bigl(2 \ch_2(E) \ch_0(E) - \ch_1(E)^2\bigr)  +\cdots
.
}
$$
In particular, note that all odd-degree terms are 0.

Now, applying this to $E=F_*^e \O_X$, we get
$$
\let\bigg\Big
\medmuskip2.5mu
\eqalign{
\ch(\SEnd F_*^e\O_X) &= 
p^{2en}+
2p^{en}\bigl((p^{e(n-2)}-p^{en})d_2-(p^{e(n-1)}-p^{en})d_1^2\bigr) - (p^{e(n-1)}-p^{en})^2d_1^2 + \cdots
\cr&=
p^{2en}+
2(p^{2e(n-1)}-p^{2en})d_2 - (p^{2e(n-1)}-p^{2en})d_1^2 + \cdots
\cr&=
p^{2en}+
(p^{2e(n-1)}-p^{2en})(2d_2-d_1^2 )
}
$$

Thus, we have that
$$
\chi(\SEnd F_*^e\O_X) =\int \ch( \SEnd F_*^e\O_X)\cdot \td(X) =
\let\bigg\Big
p^{2en} d_n + 
(p^{2e(n-1)}-p^{2en})(2d_2-d_1^2 )d_{n-2}+\cdots
$$
and the claim is proved.
\end{proof}

\section{Del Pezzo surfaces}
\label{2fold}

\begin{thm}
\label{main1}
Let $X$ be a smooth del Pezzo surface of degree $d\leq 4$. Then $F_*^e\O_X$ is not tilting for any $e$.
\end{thm}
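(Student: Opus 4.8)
The plan is to show $\Ext^i(F_*^e\O_X,F_*^e\O_X)\neq 0$ for some $i>0$ by computing the Euler characteristic $\chi(\SEnd F_*^e\O_X)$ and comparing it against $\ext^0=\dim\Hom(F_*^e\O_X,F_*^e\O_X)$. Since $\Ext^i=H^i(\SEnd F_*^e\O_X)$ by the lemma above, and since $X$ is a surface so the only possibly-nonzero cohomology is in degrees $0,1,2$, the Euler characteristic is $\chi=\hom-\ext^1+\ext^2$. If I can show that $\chi<\hom$, then necessarily $\ext^1+\ext^2>\ext^2-\ext^1\geq -\ext^1$ forces one of $\ext^1,\ext^2$ to be nonzero; more precisely, $\ext^1-\ext^2=\hom-\chi$, so if $\hom\neq\chi$ then not both higher Ext groups can vanish. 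Thus the whole argument reduces to computing $\chi$ via the Proposition and bounding $\hom$ from below.

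First I would specialize the Chern-class formula of the Proposition to the surface case $n=2$: there $\chi(\SEnd F_*^e\O_X)=p^{4e}d_2+(p^{2e}-p^{4e})(2d_2-d_1^2)$, where $d_1=c_1(X)/2$ and $d_2=(c_1(X)^2+c_2(X))/12$. For a del Pezzo surface of degree $d$ we have $c_1(X)^2=K_X^2=d$ and $\chi(\O_X)=d_2=1$ (since $\chi(\O_X)=1$ for any rational surface, giving $c_1(X)^2+c_2(X)=12$). Substituting these gives a clean closed form for $\chi$ as a function of $d$, $p$, and $e$; the coefficient $2d_2-d_1^2=2-d/4=(8-d)/4$ is the key quantity, and for $d\leq 3$ it is positive.

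Next I would bound $\hom=\dim\Hom(F_*^e\O_X,F_*^e\O_X)$ from above. The natural bound comes from the fact that $\SEnd F_*^e\O_X\cong F_*^e D_X^{(e)}$, and global endomorphisms correspond to $H^0(X,D_X^{(e)})$; since $X$ is Fano, one expects rigidity forcing $\hom$ to grow only polynomially (in fact one expects $F_*^e\O_X$ to have relatively few endomorphisms), whereas $\chi$ contains the term $(p^{4e}-p^{2e})(8-d)/4$ with the \emph{opposite} sign. Concretely, I would argue that $\chi$ is \emph{negative} (or at least smaller than any reasonable lower bound on $\hom\geq 1$): since the leading term $p^{4e}d_2=p^{4e}$ is cancelled against $-p^{4e}(2d_2-d_1^2)=-p^{4e}(8-d)/4$, the top-order coefficient in $p^{4e}$ is $1-(8-d)/4=(d-4)/4<0$ for $d\leq 3$. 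Hence $\chi\to-\infty$ as $e\to\infty$, while $\hom\geq 0$ always, giving $\ext^1-\ext^2=\hom-\chi>0$ and forcing $\ext^1>0$.

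The main obstacle is making the sign argument robust enough to conclude for \emph{all} $e>0$ rather than merely $e\gg0$, as the theorem claims. For $d\leq 3$ the leading coefficient $(d-4)/4$ is strictly negative, so $\chi(\SEnd F_*^e\O_X)<0$ should already hold for every $e\geq 1$ once one checks that the subleading $p^{2e}$-term does not overwhelm it; I would verify this by exhibiting $\chi$ explicitly as $\frac{d-4}{4}p^{4e}+\frac{8-d}{4}p^{2e}+1$ and checking it is negative for all $e\geq 1$ and all $p\geq 2$ when $d\leq 3$. Since $\chi<0\leq \ext^2$ forces $\ext^1=\ext^2-\chi\geq -\chi>0$, independent of any lower bound on $\hom$, this is in fact the cleanest route and sidesteps the endomorphism estimate entirely. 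The delicate point is ensuring the inequality $\frac{d-4}{4}p^{4e}+\frac{8-d}{4}p^{2e}+1<0$ genuinely holds in the worst case $d=3$, $p=2$, $e=1$, which I would confirm by direct substitution.
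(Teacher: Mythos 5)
Your overall strategy is the same as the paper's: specialize the Chern-class formula to $n=2$, get $\chi(\SEnd F_*^e\O_X)=\frac{d-4}{4}p^{4e}+\frac{8-d}{4}p^{2e}$, and use negativity of $\chi$ to force a nonzero higher Ext group. (Note there is no constant term $+1$ in this formula: the $p^{4e}d_2=p^{4e}$ contribution is already absorbed into the $\frac{d-4}{4}p^{4e}$ coefficient, so your displayed expression $\frac{d-4}{4}p^{4e}+\frac{8-d}{4}p^{2e}+1$ is off by $1$.) However, there is a genuine gap at exactly the point you flag as delicate. Writing $x=p^{2e}$, one has $\chi=\frac{x}{4}\bigl(x(d-4)+(8-d)\bigr)$, which is negative precisely when $x>\frac{8-d}{4-d}$. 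For $d\leq 3$ this holds for every prime power $p^{2e}$ \emph{except} $d=3$, $p=2$, $e=1$, where $\frac{8-d}{4-d}=5>4=p^{2e}$ and direct substitution gives $\chi=16\cdot(-\tfrac14)+4\cdot\tfrac54=1>0$ (and $2>0$ even with your spurious $+1$). So the inequality you propose to "confirm by direct substitution" in the worst case is in fact false there, and the pure Euler-characteristic argument cannot close that case. Since the theorem claims nonvanishing for \emph{all} $e$, this case must be handled.

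The paper closes this gap by a separate argument: when $\chi=1$ it suffices to show $h^0(\SEnd F_*\O_X)>1$, since then $h^1-h^2=h^0-\chi>0$ forces $h^1>0$. For an $F$-split cubic surface, $F_*\O_X$ has $\O_X$ as a proper direct summand, so $\SEnd F_*\O_X$ has at least two linearly independent global idempotent sections, giving $h^0>1$; the non-$F$-split cubic surface is a specialization of $F$-split ones, so upper-semicontinuity of $h^0$ carries the bound over. You would need to add an argument of this kind (or some other lower bound on $h^0$, or a direct cohomology computation) to handle $d=3$, $p=2$, $e=1$; your proposal as written instead tries to bound $\hom$ from \emph{above}, which points in the wrong direction for this purpose. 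The remaining cases of your argument ($\chi<0$, hence $\dim\Ext^1\geq-\chi>0$ regardless of $h^0$) are correct and match the paper.
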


\begin{proof}
To obtain nonvanishing of the higher cohomology of $\SEnd F_*^e \O_X$,
it suffices to show that if $X$ is a smooth del Pezzo of degree $\leq d$, then  
$$\chi(F_*^e\O_X)=
\Hom(F_*^e\O_X,F_*^e\O_X)
-\Ext^1(F_*^e\O_X,F_*^e\O_X)
+\Ext^2(F_*^e\O_X,F_*^e\O_X)
$$
 is nonpositive.

When $n=\dim X=2$, we have that the Todd class is $1+c_1(X)/2+(c_1^2(T_X)+c_2(T_X))/12$, i.e., $d_1=c_1(T_X)/2$ and $d_2=(c_1^2(T_X)+c_2(T_X))/12$.
Thus, we have that $d_1^2 = c_1(T_X)^2/4$.
For a degree-$d$ del Pezzo, $c_1(T_X)^2=d$, so $d_1^2 =d/4$, and $d_2=1$. 
In particular, $2d_2-d_1^2=2-d/4$.

Thus, we obtain that 
$$
\chi(F_*^e\O_X) = p^{4e}+(p^{2e}-p^{4e})(2-d/4)
=
p^{4e}\cdot \frac{d-4}{4}
+
p^{2e}\cdot \frac{8-d}{4}.
$$
The leading term and next term are
$$
p^{4e}\cdot \frac{d-4}{4}\quad\text{and}\quad
p^{2e}\cdot \frac{8-d}{4}
$$
respectively
Clearly for $e\gg0$ the leading term is negative when $d<4$, so the claim holds for $d<4$ and $e\gg0$. 
In fact when $1<d<4$ we have $(8-d)/4 <7/4< 2$, so 
$$
p^{2e}\frac{8-d}{4} < p^{2e}\cdot 2 = p^{4e}\cdot \frac{2}{p^{2e}}.
$$
If 
$$
p^{4e}\cdot \frac{2}{p^{2e}} \leq 
p^{4e}\cdot \frac{d-4}{4},
$$
then $\chi(\SEnd F_*^e\O_X)\leq 0$.

If $p>2$ or $e>1$, then $2/p^{2e}<1/4<(4-d)/4$, and thus $\chi(\SEnd F_*^e\O_X)<0$. 
Similarly,
if $p=2$ and $e=1$,  and $d<3$, $(4-d)/4\geq 1/2\geq 2/p^{2e}$, so the Euler characteristic is 0; since $H^0(\SEnd F_*^e\O_X)>0$ (since the identity is a global section), again $\SEnd F_*^e\O_X$ must have higher cohomology.

The only missing case when $d=3$ is thus $p=2, e=1$; 
in this case, one can check that $\chi(\SEnd F_*\O_X)=1$. Thus, as long as $h^0(\SEnd F_* \O_X)>1$, $\SEnd F_*\O_X$ must have higher cohomology. If $X$ is $F$-split, then there are multiple indecomposable summands of $\O_X$, so $h^0(\SEnd F_*\O_X)>1$ (since each indecomposable summand of $F_*\O_X$ corresponds to an idempotent global section of $\SEnd F_*\O_X$). Since the non-$F$-split cubic surface $X_0$ can be obtained as the special fiber of a family of $F$-split cubic surfaces $X_t$, upper-semicontinuity says that 
$h^0(\SEnd F_* \O_{X_0})\geq 
h^0(\SEnd F_* \O_{X_t}) >1$. Thus, for any smooth cubic surface $X$, $h^0(\SEnd F_*\O_X)>1$ and thus $\SEnd F_*\O_X$ has higher cohomology.

The last case is $d=4$. This requires a slight variant on the argument above, where we replace the role of $F_*^e\O_X$ by $\coker(F_*^{e_1}\O_X\to F_*^e\O_X) =: \wtilde B_e$. Since $X$ is $F$-split by \cite{HaraFsplit}, $\wtilde B_e$ is a summand of $F_*^e\O_X$, and so it suffices to prove that $\chi(\wtilde B_e) <0$ for any $e>0$.

Since $\ch(\wtilde B_e) + \ch(\O_X) = \ch(F_*^e\O_X)$, we can calculate $\ch(\wtilde B_e)$ and thus $\ch(\End \wtilde B_e)$ by the methods of the preceding section; the result is that
$$
\eqalign{
\ch_0(\SEnd \wtilde B_e ) &= (p^{2e}-p^{2(e-1)})^2
\cr
\ch_1(\SEnd \wtilde B_e ) &= 0
\cr
\ch_2(\SEnd \wtilde B_e ) &= 
-2 (p^{2e}-p^{2(e-1)})^2 d_2
+
(p-1)^2 p^{2(e-1)}\bigl( (p^{e}+p^{e-1})^2-1\bigr)
d_1^2
}
$$
Plugging this into Hirzebruch--Riemann--Roch, we get
$$
\chi (\SEnd \wtilde B_e) = 
-(p^{2e}-p^{2(e-1)})^2 + 
(p-1)^2 p^{2(e-1)}\bigl( (p^{e}+p^{e-1})^2-1\bigr) \cdot \frac d 4
$$
Under the assumption $d=4$, this simplifies
to
$$ -p^{2e}+2p^{2e-1}-p^{2e-2}.$$
This is negative for all $p\geq 2$ and $e\geq 1$, and thus $\wtilde B_e$ is not tilting; since it is a direct summand of $F_*^e\O_X$, neither is $F_*^e\O_X$.
\end{proof}


\begin{rem}[$d=5$]
\label{remHara}
The summands in the $d=5$ case are completely analyzed in \cite{Hara1}, where it is shown that $F_*^e\O_X$ is the direct summand (up to multiplicities) of five line bundles, an indecomposable vector bundle $G$ of rank $2$, and an indecomposable vector bundle $B$ of rank 3.
Moreover, $B,G$ are given as extension classes of line bundles in a characteristic-free fashion, and there is always one summand of $B$ and $(p^e-2)(p^e-3)/2$ summands of $G$ in $F_*^e\O_X$.

One can check by direct computation that $\Ext^1(B,G)\neq 0$ (in fact, it is 1-dimensional). Thus, in particular, $F_*^e\O_X$ is not tilting for $p^e> 3$; however, this is not detected numerically by $\chi(F_*^e\O_X)$.
\end{rem}

\begin{rem}
\label{comparison}
The results of this section contradict the claims of Section~5 of \cite{samokhin}, which claims that if $X$ is the blowup of $\P^2$ at any number of points in general position (e.g., a del Pezzo surface) then $F_*^e\O_X$ is tilting.
Here, we point out a gap in the proof in \cite{samokhin}, to clarify the situation: 

The issue is Proposition 5.1, which states that if $x$ is a general point on $X$, which is a blowup of $\P^2$ at some number of points, then the canonical restriction map
$$
H^0\bigl(F^*_e F_*^e (\O_X)\otimes \w_X^{1-p^e}\bigr)\to 
H^0\bigl(F^*_e F_*^e (\O_X)\otimes \w_X^{1-p^e}\otimes \O_{(p^e-1)x}\bigr)
$$
is surjective.
The argument in the proof of Proposition~5.1 only yields that
$$
h^0\bigl(F^*_e F_*^e (\O_X)\otimes \w_X^{1-p^e}\bigr) >
h^0\bigl(F^*_e F_*^e (\O_X)\otimes \w_X^{1-p^e}\otimes \O_{(p^e-1)x}\bigr),
$$
which is not enough to guarantee that the map induced by restriction is itself surjective.

Indeed, we can see that the theorem fails already for $X=\P^n$: in this case, $F_*^e(\O_X)= \O_X\oplus \O_X(-1)^a\oplus\O_X(-2)^b$ for some multiplicities $a,b$, and thus
$$
F^*_e F_*^e (\O_X)\otimes \w_X^{1-p^e} = 
\O_X(3p^{e} - 3) \oplus 
\O_X(2p^{e} - 3)^a \oplus 
\O_X(p^{e} - 3)^b.
$$
Since the canonical restriction map obtained by tensoring with $\O_X\to \O_{(p^e-1)x}$ commutes with direct summands, it  suffices to examine the restriction map on each summand. 

However, it is clear that
$
H^0(\O_X(p^{e} - 3))\to H^0(\O_{(p^e-1)x})
$
cannot be surjective, as 
$$
 H^0(\O_X(m))\to H^0(\O_{(m+2)x}) = \O_{X,x}/\mathfrak m_{x}^{m+2}
$$
is never surjective.
\end{rem}

\section{An enhancement for del Pezzo surfaces}

In fact, in the surface case we can show something slightly stronger: 

\begin{prop}
Let $X$ be a smooth surface in characteristic $p$.
Then 
$$\ch(\SEnd F_*^eL) = 
\ch(\SEnd F_*^e\O_X) 
$$
for any line bundle $L$.
\end{prop}

From the results of the preceding section, this immediately establishes:

\begin{cor}
If $X$ is a smooth del Pezzo of degree $d\leq 4$, and $L$ \emph{any} line bundle on $X$, then $F_*^e L$ is not tilting.
\end{cor}

\begin{proof}
We imitate the calculations of \Cref{cccalc}:
write $ \ch(L) = 1+l_1+l_2$.
Then
$$
\psi\inv_{p^e}(\td(X)\cdot \ch(L)) = 1+p^{-e}(d_1+l_1)+p^{-2e}(d_2+d_1l_1+l_2)
$$
and 
the expression for
$\frac{1}{\td X}$ is as previously.
Thus
$$
\ch(F_*^e L)  = 
\underbrace{p^{2e}}_{\ch_0}+\underbrace{(p^{e}-p^{2e})d_1+p^e l_1}_{\ch_1} + \underbrace{(1-p^{2e})d_2-(p^{e}-p^{2e})d_1^2+l_2+(1-p^e)l_1d_1}_{\ch_2}+\dots.
$$

Now, applying the formula for $\ch(\SEnd E)$ in terms of $\ch(E)$  to $E=F_*^e L$, we get
$$
\let\bigg\Big
\medmuskip2.5mu
\eqalign{
\ch(\SEnd F_*^eL) &= 
p^{4e}+
2p^{2e}\bigl((1-p^{2e})d_2-(p^{e}-p^{2e})d_1^2+l_2+(1-p^e)l_1d_1\bigr) - ((p^{e}-p^{2e})d_1+p^e l_1)^2.
\cr&=
p^{4e}+
2p^{2e}(1-p^{2e})d_2+
p^{2e}(1-p^{2e})d_1^2+
2p^{2e}l_2-
p^{2e}l_1^2
}
$$
(note that there is no $d_1l_1$ term).

Thus, we have that
$$
\displaylines{
\chi(\SEnd F_*^eL) =\int \ch( \SEnd F_*^eL)\cdot \td(X) 
\hfill\cr\hfill=
\let\bigg\Big
p^{4e} d_2
+
p^{4e}+
2p^{2e}(1-p^{2e})d_2+
p^{2e}(1-p^{2e})d_1^2+
2p^{2e}l_2-
p^{2e}l_1^2
}
$$
Replacing $l_2$ by $l_1^2/2$, we get
$$
p^{4e} d_2
+
p^{4e}+
2p^{2e}(1-p^{2e})d_2+
p^{2e}(1-p^{2e})d_1^2.
$$
Collecting terms, this becomes
$$
\chi(\SEnd F_*^e L)=
p^{4e}+
p^{2e}(1-p^{2e})(2d_2-d_1^2);
$$
note there is no dependence on the Chern classes $l_i$ of $L$ whatsoever, and thus the resulting
number is unchanged by replacing $L$ by $\O_X$, establishing the proposition.
\end{proof}

\section{Fano threefolds}
\label{3fold}

\begin{thm}
\label{main2}
Let $X$ be a smooth Fano threefold. If $\vol(-K_X)< 24$, then $F_*^e\O_X$ is not tilting for $e\gg0$.
\end{thm}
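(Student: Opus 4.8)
The plan is to reduce everything to the Euler-characteristic formula of the preceding Proposition and to exhibit a strictly negative leading coefficient, exactly as in the del Pezzo case but now in dimension $n=3$. Specializing that formula to $n=3$ gives
$$
\chi(\SEnd F_*^e\O_X) = p^{6e}\int_X d_3 + (p^{4e}-p^{6e})\int_X (2d_2-d_1^2)\,d_1,
$$
with no further terms surviving in degree $3$. The whole argument then comes down to evaluating the two intersection numbers $\int_X d_3$ and $\int_X(2d_2-d_1^2)d_1$ and reading off the sign of the coefficient of $p^{6e}$.

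For the first integral I would invoke Hirzebruch--Riemann--Roch for the structure sheaf: since only the top-degree part of $\td(X)$ survives on a threefold, $\int_X d_3 = \chi(\O_X)=1$ by hypothesis (automatic for a Fano threefold by Kodaira vanishing). For the second, I would rewrite $2d_2-d_1^2$ in terms of Chern classes. A short computation gives $2d_2-d_1^2 = (2c_2-c_1^2)/12$, so that $(2d_2-d_1^2)d_1 = (2c_1c_2 - c_1^3)/24$. The two ingredients here are again supplied by the Fano/HRR input: the identity $d_3 = c_1c_2/24$ together with $\int_X d_3 = 1$ yields $\int_X c_1c_2 = 24$, while $c_1(T_X) = -K_X$ gives $\int_X c_1^3 = (-K_X)^3 = \vol(-K_X)$. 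Hence
$$
\int_X(2d_2-d_1^2)\,d_1 = \frac{48-\vol(-K_X)}{24}.
$$

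Substituting back and collecting the coefficient of $p^{6e}$, I expect
$$
\chi(\SEnd F_*^e\O_X) = p^{6e}\cdot\frac{\vol(-K_X)-24}{24} + p^{4e}\cdot\frac{48-\vol(-K_X)}{24}.
$$
When $\vol(-K_X)<24$ the leading coefficient $(\vol(-K_X)-24)/24$ is strictly negative, so $\chi(\SEnd F_*^e\O_X)\to -\infty$ and in particular is negative for $e\gg0$. Finally, since the identity endomorphism always gives $H^0(\SEnd F_*^e\O_X)\geq 1$, a negative Euler characteristic forces $H^i(\SEnd F_*^e\O_X)\neq 0$ for some $i>0$; by the earlier identification $\Ext^i(F_*^e\O_X,F_*^e\O_X)=H^i(\SEnd F_*^e\O_X)$, this shows $F_*^e\O_X$ is not tilting.

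The main obstacle, such as it is, is not conceptual but bookkeeping: one must check that the constant $24$ emerges correctly from the competing contributions of $\int_X c_1c_2$ and $\int_X c_1^3$, and that the Fano hypotheses are used in exactly the right places (Kodaira vanishing to force $\chi(\O_X)=1$, and ampleness of $-K_X$ to identify $\int_X c_1^3$ with the volume). The threshold $\vol(-K_X)=24$ in the statement is precisely the value at which the $p^{6e}$-coefficient vanishes; this explains why the Euler characteristic is a borderline tool here, and why — unlike the surface case — I only expect to obtain the conclusion for $e\gg 0$ rather than for all $e$.
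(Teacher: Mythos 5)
Your proposal is correct and follows essentially the same route as the paper: specialize the Euler-characteristic formula to $n=3$, use $\chi(\O_X)=1$ to get $\int_X c_1c_2=24$ and hence $\int_X(2d_2-d_1^2)d_1=(48-\vol(-K_X))/24$, and read off that the $p^{6e}$-coefficient is negative precisely when $\vol(-K_X)<24$. The only cosmetic difference is that you evaluate $(2d_2-d_1^2)d_1$ directly as $(2c_1c_2-c_1^3)/24$ rather than computing $d_2d_1$ and $d_1^3$ separately, and you make explicit the final step (negative $\chi$ plus $h^0\geq 1$ forces higher cohomology) that the paper leaves implicit.
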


%

\begin{rem}
The condition that $\vol(-K_X)< 24$ is true for many Fano threefolds; in particular, for prime Fano threefolds (those of Picard number 1), it holds for 12 of the 17 families occurring in characteristic 0 (all except for del Pezzo threefolds of degree $\geq 3$, the quadric, and $\P^3$ itself). Thus, the theorem provides a large class of Fano varieties for which $F_*^e\O_X$ is not tilting.
\end{rem}

\begin{proof}
Again, it suffices to show that the condition $\vol(-K_X)< 24$ implies that $\chi(\SEnd F_*^e\O_X)\leq 0$ for $e\gg0$.

When $n=\dim X=3$, we have that the Todd class is $1+c_1(T_X)/2+(c_1^2(T_X)+c_2(T_X))/12+c_1(T_X)c_2(T_X)/24$, i.e., $d_1=c_1(X)/2$, $d_2=(c_1^2(T_X)+c_2(T_X))/12$, and $d_3=c_1(T_X)c_2(T_X)/24$.
We need to understand the values of the intersection products $d_3$, $d_2\cdot d_1$, and $d_1^3$. It is clear that $d_1^3 = (-K_X)^3/8$.
Moreover,
Corollary 1.5 of [SB] guarantees smooth Fano threefolds have $\chi(\O_X)=1$, and thus
 $c_1(T_X)c_2(T_X)=24$ (by Riemann--Roch for $\O_X$).
Thus, we have that $d_3=1$ and $d_2d_1=(-K_X)^3/24+1$.
In particular,
$$(2d_2-d_1^2 )\cdot d_1 = \frac{(-K_X)^3}{12}+2-\frac{(-K_X)^3}{8}
=\frac{48-(-K_X)^3}{24}
=\frac{48-\vol(-K_X)}{24}
 $$
Thus, we have that
$$
\chi(\SEnd F_*^e\O_X) 
\let\bigg\Big
=
p^{6e}+ 
(p^{4e}-p^{6e})\cdot 
\frac{48-\vol(-K_X)}{24}
=
p^{6e}\cdot \frac{\vol(-K_X)-24}{24}
+
p^{4e}\cdot \frac{48-\vol(-K_X)}{24}.
$$
The leading term is negative exactly for $\vol(-K_X)< 24$, giving the theorem.
\end{proof}

\section{Varieties with moduli, $D$-affinity, and other open questions}
\label{moduli}

Motivated by these examples, we make the following conjecture:

\begin{conj}
\label{conj1}
Let $X$ be a smooth projective variety with nontrivial first-order deformations (i.e., with $h^1(T_X)>0$). Then $F_*^e\O_X$ is not tilting for any $e>0$.
\end{conj}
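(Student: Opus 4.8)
The plan is to connect the hypothesis $h^1(T_X)>0$ to the nonvanishing of $\Ext^1(F_*^e\O_X, F_*^e\O_X) = H^1(X, D_X^{(e)})$, using the filtration of the sheaf of differential operators by order. Recall from \Cref{pparts} that $D_X^{(e)} = \SHom_{\O_X}(\O_{X\times X}/\Delta^{p^{[e]}}, \O_X)$, and that the sheaf of differential operators $D_X$ carries the order filtration $D_X^0 \subset D_X^1 \subset D_X^2 \subset \cdots$ whose associated graded pieces are the symmetric powers of the tangent sheaf: $\mathrm{gr}^m D_X = \Sym^m T_X$. In particular, the order-$\leq 1$ part fits into a short exact sequence $0 \to \O_X \to D_X^1 \to T_X \to 0$, where $\O_X = D_X^0$ is the multiplication operators and the quotient is the tangent sheaf (the symbol). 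First I would observe that for $p^e$ large, the operators of order $1$ lie inside $D_X^{(e)}$, so there is an inclusion $D_X^1 \hookrightarrow D_X^{(e)}$.

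The key step is to produce a class in $H^1(X, D_X^{(e)})$ from the given class in $H^1(X, T_X)$. Taking cohomology of the sequence $0 \to \O_X \to D_X^1 \to T_X \to 0$ yields a connecting map and, more usefully, a surjection-or-injection analysis of $H^1(X, D_X^1) \to H^1(X, T_X)$; because $X$ is Fano we have strong vanishing ($H^i(X,\O_X)=0$ for $i>0$ by Kodaira vanishing, as $\chi(\O_X)=1$ is already invoked in \Cref{main2}), so the sequence forces $H^1(X, D_X^1) \cong H^1(X, T_X)$, which is nonzero by hypothesis. Thus I would next try to show that the inclusion $D_X^1 \hookrightarrow D_X^{(e)}$ induces a nonzero (ideally injective) map on $H^1$. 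The cleanest route is to examine the quotient $D_X^{(e)}/D_X^1$ and argue that the long exact sequence does not kill the class coming from $H^1(X, T_X)$; since the symbol of a first-order operator is intrinsic and the associated graded of $D_X^{(e)}$ recovers the full truncated symmetric algebra $\bigoplus_{m < p^e}\Sym^m T_X$, the degree-one graded piece $T_X$ splits off at the level of symbols, suggesting the class survives.

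I would structure the argument as follows. First, fix $e$ with $p^e \geq 2$ so that $D_X^1 \subset D_X^{(e)}$. Second, use Fano vanishing to identify $H^1(X, D_X^1)$ with $H^1(X, T_X) \neq 0$. Third, analyze the symbol (principal-part) map $\sigma\colon D_X^{(e)} \to \Sym^1 T_X = T_X$ restricted from the full associated-graded projection; this is an $\O_X$-linear splitting of the order filtration in degree one, and composing the inclusion $D_X^1 \hookrightarrow D_X^{(e)}$ with $\sigma$ recovers the symbol map $D_X^1 \to T_X$, which on $H^1$ is the isomorphism just established. Therefore $H^1(X, D_X^{(e)}) \to H^1(X, T_X)$ is surjective, forcing $H^1(X, D_X^{(e)}) \neq 0$, i.e. $\Ext^1(F_*^e\O_X, F_*^e\O_X) \neq 0$.

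The main obstacle is the third step: the symbol map $\sigma\colon D_X^{(e)} \to T_X$ is only $\O_X$-linear with respect to the \emph{left} module structure, and one must verify that this really is a \emph{retraction} of the order filtration globally as sheaves, not merely at the level of the associated graded. Concretely, the order filtration on $D_X^{(e)}$ does not canonically split as sheaves in positive degrees, so I cannot simply project; what I can use is that the order-one truncation $D_X^1 \subset D_X^{(e)}$ is a subsheaf whose quotient $Q := D_X^{(e)}/D_X^1$ has a filtration with graded pieces $\Sym^m T_X$ for $2 \leq m < p^e$, and I would need the connecting map $H^1(X,T_X) \to H^2(X, D_X^1)$ to be compatible in a way that does not obstruct survival. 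This is where the Fano and dimension hypotheses must be deployed carefully, and where the argument is genuinely incomplete — indeed the authors flag the conjecture as only a heuristic (\emph{``a more conceptual (though incomplete!) approach''}), so I would expect that making the survival of the class rigorous for all Fano $X$, rather than for favorable cohomological configurations, is precisely the gap that prevents this from being a theorem.
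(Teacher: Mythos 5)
The statement you are addressing is a \emph{conjecture}: the paper does not prove it, and the remark following it explicitly presents only an incomplete strategy. Your proposal follows essentially the same outline as that remark (exhibit $H^1(T_X)$ inside $H^1(D^1_X)$, then try to push the class forward along $D^1_X\hookrightarrow D^{(e)}_X$), but your step three --- the assertion that there is an $\O_X$-linear ``symbol'' map $\sigma\colon D^{(e)}_X\to T_X$ retracting the inclusion of the order-one part --- is precisely the missing ingredient, and it is false as stated. The symbol of a differential operator of order exactly $m$ lives in $\mathrm{gr}^m=\Sym^m T_X$, the \emph{top} graded piece; there is no canonical projection from $D^{(e)}_X$ onto the degree-one graded piece $T_X$, because the order filtration does not split as a filtration of $\O_X$-modules (the only canonical retraction out of $D^{(e)}_X$ is evaluation at $1$, which lands in $\O_X=D^0_X$). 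You concede exactly this in your final paragraph, so the proposal reduces to the same open point the paper isolates: in the long exact sequence attached to $0\to D^1_X\to D^{(e)}_X\to \SHom(\Delta^2/\Delta^{[p^e]},\O_X)\to 0$, one must show the connecting map $\beta\colon H^0(\SHom(\Delta^2/\Delta^{[p^e]},\O_X))\to H^1(D^1_X)$ is not surjective, and neither you nor the paper does so.

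Two smaller points. First, the conjecture concerns an arbitrary smooth projective variety with $h^1(T_X)>0$, not a Fano variety, so you cannot invoke $H^1(\O_X)=0$ (and Kodaira vanishing can fail in characteristic $p$ in any case); the paper instead uses the splitting $D^1_X\cong\O_X\oplus T_X$ given by evaluation at $1$, which yields $H^1(D^1_X)\supseteq H^1(T_X)\neq 0$ with no vanishing hypothesis. Second, even granting all of this, what one obtains is nonvanishing of $H^1(D^{(e)}_X)=H^1(\SEnd F_*^e\O_X)=\Ext^1(F_*^e\O_X,F_*^e\O_X)$, which is indeed what is needed; the failure is localized entirely in the survival of the class, not in the translation to the tilting property.
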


\begin{rem}

We describe a possible method of proof for this conjecture.
First, note that for any smooth variety $X$, we have a short exact sequence
$$
0\to \Delta^2/\Delta^{[p^e]}\to \O_{X\times X}/\Delta^{[p^e]}\to \O_{X\times X}/\Delta^2\to0,
$$
where $\Delta$ is the ideal of the diagonal in $X\times X$.
Viewing these as $\O_X$-modules via restriction of scalars along the left inclusion $\O_X\to \O_{X\times X}$, 
we obtain a short exact sequence of locally free sheaves on $X$.
Taking $\O_X$-duals, by \Cref{pparts} we obtain 
$$
0\to D^1_X \to D^{(e)}_X \to \SHom(\Delta^2/\Delta^{[p^e]},\O_X) \to 0,
$$
where $D^{(e)}_X$ is the sheaf of $O_X^{p^e}$-linear endomorphisms of $\O_X$.

Note that the canonical short exact sequence $0\to \O_X \to D^1_X \to T_X\to 0$ splits via the map $D^1_X\to \O_X$ given by evaluation at 1. Thus, we have $D^1_X= \O_X\oplus T_X$, and in particular 
$H^1(D^1_X) =H^1(\O_X)\oplus H^1(T_X)$. By assumption, $H^1(T_X)\neq 0$ and thus $H^1(D^1_X)\neq 0$.

Now, the hope would be to show that the map $\gamma:H^1(D^1_X)\to H^1(D^{(e)}_X)$ is not the zero map. The point is that $F_*^e D^{(e)}_X = \SEnd F_*^e\O_X$, and thus if $D^{(e)}_X$ has nontrivial cohomology then $\SEnd F_*^e\O_X$ is not tilting.

The map $\gamma$ fits into the sequence
$$
\displaylines{
0\to H^0(D^1_X) \to H^0(D^{(e)}_X) \xra \a H^0(\SHom(\Delta^2/\Delta^{[p^e]},\O_X))
\hfill\cr\hfill
\xra \beta H^1(D^1_X) \xra \gamma H^1(D^{(e)}_X) \to H^1(\SHom(\Delta^2/\Delta^{[p^e]},\O_X))
}
$$
In particular,
we have $\ker \g = \im \b$, so it would suffice to show $\b$ is not surjective.

Unfortunately, without a better understanding of the sheaves $\Delta^2/\Delta^{[p^e]}$ (and their duals) it seems hard to control the surjectivity of $\b$ or the injectivity of $\g$, and so we are not able to conclude that $H^1(D^{(e)}_{X})\neq 0$. 
\end{rem}

Finally, we conclude with a potential connection to $D$-affinity. Recall that a variety $X$ is $D$-affine if every $D_X$-module $M$ is generated by global sections and $H^i(M)=0$ for $i>0$. In particular, we must have that $H^i(D_X)=0$.
$D$-affinity is known to be a restrictive condition; for example, by \cite{Langer3} if $\Char k >7$, then the only smooth projective $D$-affine surfaces are $\P^2$ and $\P^1\times\P^1$.

The following conjecture would allow us to connect $D$-affinity and the tilting property of $F_*^e\O_X$:

\begin{conj}
Let $X$ be a globally $F$-split variety.
\begin{enumerate}
\item 
for $e'>e$ we have that $H^i(D^{(e)}_X)$ is a direct summand of $H^i(D^{(e')}_X)$, and thus:
\item
 if $F_*^e \O_X$ is not tilting for some $e$, then $X$ is not $D$-affine.
\end{enumerate}
\end{conj}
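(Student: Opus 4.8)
The plan is to deduce the non-$D$-affinity in (2) from the nonvanishing of $H^i(D_X)$ for some $i>0$, exploiting that $D_X=\bigcup_e D^{(e)}_X$ presents $D_X$ as a filtered colimit and that sheaf cohomology on the Noetherian space $X$ commutes with such colimits, so that $H^i(D_X)=\varinjlim_e H^i(D^{(e)}_X)$ along the natural inclusions. Since $D$-affinity forces $H^i(D_X)=0$ for $i>0$, it suffices to exhibit a nonzero class in this colimit. The role of statement (1) is to guarantee that once $H^i(D^{(e_0)}_X)\neq0$ for some $e_0$ and some $i>0$—which is exactly the failure of tilting, by the identification $H^i(\SEnd F_*^e\O_X)=H^i(D^{(e)}_X)$—the groups $H^i(D^{(e')}_X)$ remain nonzero for all $e'\geq e_0$.

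To prove (1) I would use the $F$-splitting directly. Writing $e'=e+f$, the Frobenius unit $\O_X\to F_*^f\O_X$ and a splitting $\psi\colon F_*^f\O_X\to\O_X$ (which exists since $X$ is globally $F$-split) compose to the identity; applying the exact functor $F_*^e$ exhibits $F_*^e\O_X$ as a direct summand of $F_*^{e'}\O_X=F_*^eF_*^f\O_X$, with split inclusion $\iota\colon F_*^e\O_X\to F_*^{e'}\O_X$ and retraction $\rho$ satisfying $\rho\iota=\mathrm{id}$. The elementary fact that $\SHom(-,-)$ carries a direct summand $A$ of $B$ to a direct summand $\SEnd A$ of $\SEnd B$—via $g\mapsto\iota g\rho$, with retraction $h\mapsto\rho h\iota$, using $\rho\iota=\mathrm{id}$ twice—then shows $\SEnd F_*^e\O_X$ is a direct summand of $\SEnd F_*^{e'}\O_X$ as $\O_X$-modules. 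Taking cohomology and applying $H^i(\SEnd F_*^e\O_X)=H^i(D^{(e)}_X)$ gives statement (1); in particular $\dim H^i(D^{(e)}_X)$ is non-decreasing in $e$ and is eventually nonzero once $F_*^e\O_X$ fails to be tilting.

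The subtle point—and the reason (2) does not follow formally from (1) as stated—is that the colimit $\varinjlim_e H^i(D^{(e)}_X)$ is computed along the \emph{natural} inclusions $D^{(e)}_X\hookrightarrow D^{(e')}_X$ (which simply regard an $\O_X^{p^e}$-linear operator as $\O_X^{p^{e'}}$-linear), whereas the summand inclusion produced above is the conjugation map $g\mapsto\iota g\rho$, a genuinely different map of sheaves; indeed, the obvious candidate retraction $h\mapsto\rho h\iota$ does \emph{not} split the natural inclusion, since $\rho\,\phi\,\iota$ recovers $\phi$ only on $\O_X^{p^f}$. Thus knowing each $H^i(D^{(e)}_X)$ is \emph{abstractly} a summand of $H^i(D^{(e')}_X)$ does not by itself prevent the natural transition maps from annihilating every class in the colimit. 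The main obstacle is therefore to show that the natural maps $H^i(D^{(e)}_X)\to H^i(D^{(e')}_X)$ are (split) injective, equivalently that the colimit is nonzero.

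I would attempt to resolve this by producing a coherent system of Frobenius splittings: retractions $\rho_{e,e'}\colon D^{(e')}_X\to D^{(e)}_X$ that split the \emph{natural} inclusions (not the conjugation maps) and that are transitive in $e$, built from a single fixed splitting $\phi\colon F_*\O_X\to\O_X$ and its iterates, so that they assemble on cohomology into a retraction of $H^i(D_X)$ onto each $H^i(D^{(e)}_X)$. Given such compatible retractions, the natural transition maps are split injective, so the colimit contains $H^i(D^{(e_0)}_X)\neq0$ as a subspace, whence $H^i(D_X)\neq0$ and $X$ is not $D$-affine. I expect this compatibility—reconciling the conjugation-type summand structure with the order filtration defining $D_X$—to be the crux of the argument, and the reason the statement is only a conjecture; a more hands-on alternative would be to track a single nonzero class of $H^i(D^{(e_0)}_X)$ up the tower by analyzing the natural maps directly on the short exact sequences $0\to\Delta^2/\Delta^{[p^e]}\to\O_{X\times X}/\Delta^{[p^e]}\to\O_{X\times X}/\Delta^2\to0$ and their $\O_X$-duals.
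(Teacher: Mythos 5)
You should first note that the paper does not prove this statement: it is stated as a conjecture, and the surrounding discussion is devoted entirely to explaining why the apparently obvious argument fails. Measured against that discussion, your write-up is essentially a faithful reconstruction of the paper's own analysis rather than a proof. The reduction of (2) to (1) via the direct limit $H^i(D_X)=\lim_e H^i(D^{(e)}_X)$ is the same one-line remark the paper makes immediately after the conjecture. Your verification that $\SEnd F_*^e\O_X$ is an $\O_X$-module direct summand of $\SEnd F_*^{e'}\O_X$ (via $g\mapsto\iota g\rho$, using a global $F$-splitting) is the content the paper attributes to \cite[1.1]{DaffineB2}. Most importantly, your observation that this conjugation-type inclusion is \emph{not} the natural inclusion $D^{(e)}_X\hookrightarrow D^{(e')}_X$ along which the direct limit is formed --- so that the abstract summand statement does not control the transition maps and hence does not yield (1) in the form needed for (2) --- is precisely the gap the paper identifies in the published argument of \cite{DaffineB2}. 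The paper makes the discrepancy concrete with the example $X=\Spec \mathbb F_2[t]$, where the conjugation map sends $\partial/\partial t\in D^{(1)}_X$ to $\frac{1}{2}(\partial/\partial t)^2+t\cdot\frac{1}{2}(\partial/\partial t)^3\in D^{(2)}_X$ rather than to $\partial/\partial t$ itself; your remark that $\rho\,\phi\,\iota$ recovers $\phi$ only on $\O_X^{p^f}$ is the same phenomenon seen from the other side.

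The genuine gap, then, is the one you yourself flag: your proposed resolution --- a coherent system of retractions splitting the \emph{natural} inclusions $D^{(e)}_X\hookrightarrow D^{(e')}_X$, or at least a proof that the natural maps $H^i(D^{(e)}_X)\to H^i(D^{(e')}_X)$ are injective --- is never constructed, and it is exactly what the paper records as open (``it remains open \ldots whether the natural inclusions $D^{(e)}\to D^{(e')}$ are split for $e'>e$''). There is no reason to expect a single Frobenius splitting $\phi$ to produce retractions compatible with the order/level filtration on $D_X$; the $\mathbb F_2[t]$ computation shows the naive candidate already fails at the first step. So nothing in your proposal is incorrect, and your diagnosis of the crux is accurate and matches the paper's, but the write-up is an analysis of the obstruction rather than a proof; the conjecture remains unproved, consistent with its status in the paper.
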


Note that the second part of the conjecture follows from the first, as $H^i(D_X)=\lim H^i(D^{(e')}_X)$.

If the conjecture is true, then as a consequence the varieties dealt with in Theorems~\ref{main1} or~\ref{main2} are not $D$-affine.

\begin{rem}
The claim of the conjecture appears in Proposition~1.2 of \cite{DaffineB2}, but we believe there to be a gap in the proof there. The issue is as follows: In \cite[1.1]{DaffineB2}, the authors point out that if $X$ is globally $F$-split, then $D^{(e)}_X$ is a direct summand of $D^{(e')}_X$ as sheaves of abelian groups; this follows by viewing $D^{(e)}_X$ as $\SEnd F_*^e \O_X$, and likewise for $D^{(e')}_X$, and noting that since $X$ is $F$-split, $F_*^e \O_X$ is an $\O_X$-linear direct summand of $F_*^{e'}\O_X$, and thus the same is true for their endomorphism sheaves.

However, the natural inclusion 
$i:D^{(e)}_X\hookrightarrow D^{(e')}_X$ is \emph{not} the same as the inclusion of $\SEnd F_*^e\O_X$ in $\SEnd F_*^{e'}\O_X$ (and in fact the latter requires the fact that $X$ is globally $F$-split to define).
We provide an example here for clarity:

\begin{exa}
\def\F{\mathbb F}
Let $R=\F_2$ and
Let $X=\Spec R$. 
Let $i:D^{(1)}_X\to D^{(2)}_X$ denote the natural inclusion, and $j:D^{(1)}_X \to D^{(2)}_X$ for the map obtained via the inclusion $\End F_*R \hookrightarrow \End F_*^2R$ and the identifications $D^{(e)}_R\cong F_*^e R$.
We claim $i$ and $j$ are not identical. 

Note that in this case we have 
$D^{(1)}_X=R\<1,\d/\d t\>$ and 
$D^{(2)}_X=R\<1,\d/\d t, \frac12 (\d/\d t)^2,\frac12 (\d/\d t)^3\>$.
The image of the split inclusion $F_*F: F_*R\to F_*^2R$ is the submodule spanned by $F_*^2 1,F_*^2 t^2$, and the splitting $\tau: F_*^2 R\to F_*R $ send $F^2_*t$ and $F^2_*t^3$ to zero.
The resulting inclusion  $\End F_* R\to \End F^2_* R$ is obtained by sending $\psi \in \End F_* R$ to $F_*F \circ \psi \circ \tau$.

Thus, for example, $\d/\d t$, viewed as the element of $\End F_*R$ sending $1\mapsto 0$ and $t\mapsto 1$, is sent to the element of $\End F_*^2R$ sending $1\mapsto 0, t\mapsto 0, t^2\mapsto 1$ and $t^3\mapsto 0$.  One can check that this is the element $\frac{1}{2} (\d/\d t)^2 +t \frac{1}{2} (\d/\d t)^3$ of $D^{(2)}_X$. In other words, $j(\d/\d t)=\frac{1}{2} (\d/\d t)^2 +t \frac{1}{2} (\d/\d t)^3\neq i(\d/ \d t)$, and thus $j\neq i$.
\end{exa}

Thus, the fact that there is a split inclusion $\SEnd F_*^e\O_X\to\SEnd F_*^{e'}\O_X$ does not imply that 
$i:D^{(e)}_X\hookrightarrow D^{(e')}_X$ splits, and thus does not imply that the maps $H^i(D^{(e)}_X) \to  H^i(D^{(e')}_X)$ are injective.
\end{rem}

It seems that it remains open then whether the natural inclusions $D^{(e)}\to D^{(e')}$ are split for $e'>e$. One could also hope to at least show that 
$$
H^i(D^{(e)}_X)\to H^i(D^{(e')}_X)
$$
is injective for $e'\gg e$, even if $D^{(e)}_X\to D^{(e')}_X$ is not split.
In any case, if one could obtain either of these results, then one could conclude immediately that whenever $F_*^e\O_X$ is not tilting for $e\gg0 $ that $X$ is not $D$-affine.

\begin{rem}
Let $X_0$ be a smooth variety defined over a field of characteristic 0, and $X_p$ its reduction modulo $p$, for some prime $p$ that is smooth. If $F_*^e\O_{X_p}$ is tilting, then in particular $\Ext^i(F_*^e\O_{X_p},F_*^e\O_{X_p})=0$ for $i\geq 1$; as a consequence, the deformations of $F_*^e\O_{X_p}$ are both unobstructed (since $\Ext^2(F_*^e\O_X,F^e_*\O_X)=0$) and unique  (since $\Ext^1(F_*^e\O_X,F^e_*\O_X)=0$). Thus, $F_*^e\O_{X_p}$ lifts to a vector bundle on $X_0$.
Put another way, if $X$ is a smooth variety in characteristic $p$ that lifts to characteristic 0, then if $F_*^e\O_X$ is tilting then $F_*^e\O_X$ can be lifted to characteristic 0 as well.

The tilting condition is sufficient for $F_*^e\O_X$ to lift to characteristic 0, but not necessary; 
for example, on any toric variety, $F_*^e\O_X$ will lift to characteristic 0, whether or not it is tilting (for example, if $X$ is the toric variety $\P(\O_{\P^2}\oplus \O_{\P^2}(2))$, then $\Ext^1(F_*^e\O_X,F_*^e\O_X)\neq 0$  for $p^e\geq 3$). The same is true for the del Pezzo of degree 5 by the explicit description of the summands of $F_*^e\O_X$ by \cite{Hara1} and the remark on its tilting property in \Cref{remHara}.
\end{rem}

This prompts the following question:

\begin{quest}
For a del Pezzo surface of degree $\leq 4$ in characteristic $p$, does $F_*^e\O_X$ lift to characteristic 0?
\end{quest}

Note that if $X$ is $F$-split, this is the same as asking that $\coker(\O_X\to F_*^e\O_X)$ lifts to characteristic 0, or equivalently that the kernel of the Frobenius trace $F_*\w_X\to \w_X$ lifts to characteristic 0.

\bibliographystyle{alpha}
\bibliography{link}
\end{document}